\newtheorem{theorem}{Theorem}
\newtheorem{lemma}{Lemma}
\theoremstyle{remark}
\newcommand{\leg}[2]{\left(\frac{#1}{#2}\right)}
\begin{document}

\setcounter{page}{1}

\title[]{A (human) proof of a triple binomial sum supercongruence}

\author{Roberto~Tauraso}
\address{Dipartimento di Matematica, % \\
Universit\`a di Roma ``Tor Vergata'', % \\
via della Ricerca Scientifica, %\\
00133 Roma, Italy}
\email{tauraso@mat.uniroma2.it}

\subjclass[2010]{11B65, 11A07.}

%\date{\today}

\begin{abstract} In a recent article, Apagodu and Zeilberger discuss some applications of an algorithm for finding and proving congruence identities (modulo primes)
of indefinite sums of many combinatorial sequence. At the end they propose some supercongruences as conjectures. Here we prove one of them and we leave some remarks for the others. 
\end{abstract}

\maketitle

\section{Introduction}
\noindent We will show that  
\begin{theorem} 
Let $p>2$ be a prime, and let $r$, $s$, $t$ be any positive integers, then
\begin{equation}\label{SS}
\sum_{m_1=0}^{rp-1}\sum_{m_2=0}^{sp-1}\sum_{m_3=0}^{tp-1}
\binom{m_1+m_2+m_3}{m_1,m_2,m_3}\equiv_{p^3} 
\sum_{m_1=0}^{r-1}\sum_{m_2=0}^{s-1}\sum_{m_3=0}^{t-1}
\binom{m_1+m_2+m_3}{m_1,m_2,m_3}.
\end{equation}
\end{theorem}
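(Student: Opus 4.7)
I would mimic the proof of the two-dimensional analogue of \eqref{SS}. In that analogue the hockey-stick identity collapses the double sum to the closed form $\sum_{m_1=0}^{A-1}\sum_{m_2=0}^{B-1}\binom{m_1+m_2}{m_1}=\binom{A+B}{A}-1$, so the 2D version of \eqref{SS} reduces directly to the Jacobsthal--Kazandzidis supercongruence $\binom{(r+s)p}{rp}\equiv\binom{r+s}{r}\pmod{p^3}$ (valid for $p\ge 5$; the case $p=3$ is handled by direct inspection). No such single-term collapse is available in three dimensions, but one should still be able to reduce to a manageable family of binomials to which sharp supercongruences apply.

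\noindent\textbf{Plan.} First, apply hockey-stick to collapse one layer: from $\sum_{m_3=0}^{tp-1}\binom{m_1+m_2+m_3}{m_3}=\binom{m_1+m_2+tp}{m_1+m_2+1}$ and the factorisation $\binom{m_1+m_2+m_3}{m_1,m_2,m_3}=\binom{m_1+m_2}{m_1}\binom{m_1+m_2+m_3}{m_3}$, the left-hand side of \eqref{SS} equals
\[
\sum_{m_1=0}^{rp-1}\sum_{m_2=0}^{sp-1}\binom{m_1+m_2}{m_1}\binom{m_1+m_2+tp}{m_1+m_2+1},
\]
and the right-hand side equals the analogous expression with $(rp,sp,tp)$ replaced by $(r,s,t)$. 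Grouping by $n=m_1+m_2$, this becomes $\sum_n\binom{n+tp}{n+1}\,S_{rp,sp}(n)$, where $S_{A,B}(n):=\sum_{m_1+m_2=n,\,0\le m_1<A,\,0\le m_2<B}\binom{n}{m_1}$ is a truncated Pascal row-sum. By Lucas, the prefactor $\binom{n+tp}{n+1}$ is divisible by $p$ except when $n\equiv p-1\pmod{p}$, so the mod-$p^3$ analysis splits by residue class of $n$ modulo $p$. In each class one expands $\binom{n+tp}{n+1}$ via a Granville-type sharpening of Lucas' theorem (or an explicit factorial identity such as $\binom{(k+t+1)p-1}{(k+1)p}=\binom{(k+t+1)p}{(k+1)p}\cdot t/(k+t+1)$, to which Jacobsthal--Kazandzidis applies directly) and combines this with a matched $p$-adic expansion of $S_{rp,sp}(n)$, invoking Wolstenholme's theorem on the harmonic-like sums that arise.

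\noindent\textbf{Main obstacle.} The heart of the difficulty is the interaction between the two factors modulo $p^3$. The bottom index of $\binom{n+tp}{n+1}$ carries a ``$+1$'' offset that prevents a direct application of Jacobsthal--Kazandzidis, so only a refined expansion reaches $p^3$ precision; meanwhile $S_{rp,sp}(n)$ is genuinely piecewise, equal to $2^n$ only for $n\le\min(rp,sp)-1$, and in the ``carry'' regime $n\ge\min(rp,sp)$ its $p$-adic expansion is subtle. Matching these two expansions so that the pieces collect correctly into the $\binom{r+s+t}{r,s,t}$-type quantities on the right-hand side of \eqref{SS}, and separately handling the small-prime case $p=3$ where the classical Wolstenholme and Kazandzidis congruences drop one power of $p$, will be the most delicate parts of the argument.
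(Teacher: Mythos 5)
Your opening reduction is exactly the one the paper uses: the factorisation $\binom{m_1+m_2+m_3}{m_1,m_2,m_3}=\binom{m_1+m_2}{m_1}\binom{m_1+m_2+m_3}{m_3}$, the hockey-stick collapse of the $m_3$-sum to $\binom{n+tp}{n+1}=\frac{tp}{n+1}\binom{n+tp}{n}$, the regrouping by $n=m_1+m_2$ with a truncated row-sum in $m_1$, and the split into residue blocks modulo $p$ all match the paper's setup (which writes $n=k+(i+j)p$, $m_1=m+ip$ and separates the range into three pieces $A_{ij},B_{ij},C_{ij}$ according to whether a carry occurs). But from that point on what you have written is a description of the difficulty, not a resolution of it, and the toolkit you name (Lucas, Granville, Jacobsthal--Kazandzidis, ``Wolstenholme on the harmonic-like sums'') is not the one that closes the argument. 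Summing $\binom{k}{m}$ over $m$ produces $2^k$, so the $p^3$-precision evaluation of the central block forces you through $2$-power-weighted sums such as $\sum_{k<p}\frac{2^k}{k}$, $\sum_{k<p}\frac{2^k}{k^2}$, $\sum_{k}\frac{2^k}{k}\sum_{j<k}\frac{1}{j2^j}$, and $\sum_{k}\frac{1}{k^2}\sum_{m\ge k}\frac{(-1)^{m-k}}{\binom{m}{k}}$, and the factor $2^{(p-1)(i+j+1)}$ arising from the $k=p-1$ term injects Fermat quotients $q_p(2)=(2^{p-1}-1)/p$ into every block. The proof therefore hinges on congruences for \emph{alternating} multiple harmonic sums with weight $2^{\pm k}$ --- e.g. $H_{p-1}(-1;2)\equiv -2q_p(2)\pmod{p^2}$, $H_{p-1}(-2;2)\equiv -q_p^2(2)$, $H_{p-1}(1,-1;2)\equiv 0\pmod p$ --- together with two structural cancellations that your plan does not predict: the boundary and carry blocks satisfy $B_{ij}+C_{ij}\equiv 0\pmod{p^3}$, and inside the no-carry block the $q_p(2)$ and $q_p^2(2)$ contributions of the $k=p-1$ term exactly cancel those of the $k\le p-2$ terms. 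Classical Wolstenholme statements ($H_{p-1}(1)\equiv 0\pmod{p^2}$, etc.) say nothing about these weighted sums, and Jacobsthal--Kazandzidis enters only peripherally, for binomials of the shape $\binom{(i+j+t+1)p}{(i+j+1)p}$.

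A secondary point: your worry about $p=3$ is legitimate --- several of the harmonic-sum congruences, and the statement $\binom{ap}{bp}\equiv\binom{a}{b}\pmod{p^3}$ that both you and the paper lean on, are only guaranteed for $p\ge 5$ --- but flagging the issue is not the same as disposing of it. As it stands, the proposal is a correct first reduction followed by an honest admission that the decisive step is open; to complete it you would need to develop or import the $q_p(2)$-congruences above and exhibit the cancellations explicitly.
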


\noindent The above supercongruence appears as Conjecture 6' in \cite{AZ}.
After a preliminary section in which we collect some useful results,
we prove such conjecture in the third section. In the final section we provide
some remarks. 

\section{Preliminary results}
\noindent  For a $r>0$, let
${\bf s}=(s_1, \ldots, s_r)\in (\mathbb{Z^{\ast}})^r$ and let $x\in\mathbb{R}$. We define the multiple sum
$$
H_n({\bf s};x)=
\sum_{1\le k_1<\cdots<k_r\le n}
\prod_{i=1}^r\frac{x_i^{k_i}}{k_i^{|s_i|}}
\quad \mbox{with}\quad x_i = \left\{\begin{array}{lr}
        x & \mbox{if $s_i<0$, }\\
        1 & \mbox{if $s_i>0$. }
        \end{array}\right.$$
The number $l({\bf s}):=r$ is called the depth (or length) and $|{\bf s}|:=\sum_{j=1}^r|s_j|$ is the weight of the multiple sum.
By convention, these sums are zero if $n<r$. $H_n({\bf s};1)$ is the {\sl ordinary multiple harmonic sum} and in that case we will simply write $H_n({\bf s})$. 
Then it is known (see \cite[Sections 1 and 7]{TZ}) that for $p>3$,
\begin{align}
&H_{p-1}(1)\equiv_{p^2} 0\;,\label{C1}\\
&H_{p-1}(2)\equiv_{p} 0\;,\label{C2}\\
&H_{p-1}(1,1)\equiv_{p} 0\;,\label{C3}\\
&H_{p-1}(-1;2)\equiv_{p^2} -2q_p(2)\;,\label{C4}\\
&H_{p-1}(-1;1/2)\equiv_{p} q_p(2)\;,\label{C44}\\
&H_{p-1}(-2;-1)\equiv_{p} 0\;,\label{C66}\\
&H_{p-1}(-2;2)\equiv_{p} -q^2_p(2)\;,\label{C10}\\
&H_{p-1}(1,-1;-1)\equiv_{p} q^2_p(2)\;,\label{C55}\\
&H_{p-1}(1,-1;2)\equiv_{p} 0\;,\label{C5}\\
&H_{p-1}(-1,1;1/2)\equiv_{p} 0\;, \label{C6}
\end{align}
where $q_p(2)=(2^{p-1}-1)/p$. In the next section we will need the following results.

\begin{lemma} 
Let $p>2$ be a prime, then we have
\begin{equation}\label{B1}
\sum_{k=1}^{p-1}\frac{1}{k2^k}\sum_{j=1}^{k-1} \frac{2^j}{j}\equiv_{p} 0,
\end{equation}
and
\begin{equation}\label{B2}
\sum_{k=1}^{p-1}\frac{2^k}{k}\sum_{j=1}^{k-1} \frac{1}{j2^j}\equiv_{p}
-2q^2_p(2).
\end{equation}
\end{lemma}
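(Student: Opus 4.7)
\emph{Proof proposal.} My plan is to prove \eqref{B1} and \eqref{B2} at the same time. Let $S_1$ and $S_2$ denote their left-hand sides. Expanding the product
\[
H_{p-1}(-1;2)\cdot H_{p-1}(-1;1/2)=\sum_{k_1,k_2=1}^{p-1}\frac{2^{k_1-k_2}}{k_1k_2}
\]
and partitioning the square $\{(k_1,k_2)\}$ according to $k_1<k_2$, $k_1=k_2$, $k_1>k_2$ recovers precisely $S_1+H_{p-1}(2)+S_2$. Invoking \eqref{C2}, \eqref{C4} and \eqref{C44} this yields
\[
S_1+S_2\equiv_p -2q_p^2(2),
\]
so it is enough to establish \eqref{B1}; then \eqref{B2} follows.

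To show $S_1\equiv_p 0$, I would apply the partial fraction $\tfrac{1}{jk}=\tfrac{1}{k-j}\bigl(\tfrac{1}{j}-\tfrac{1}{k}\bigr)$ and substitute $l=k-j$. The inner sum then telescopes to $H_{p-1-l}-H_{p-1}+H_l$, and modulo $p$ this collapses to $2H_l$: use \eqref{C1} for $H_{p-1}\equiv_p 0$, together with the well-known reversal $H_{p-1-l}\equiv_p H_l$ obtained from $\tfrac{1}{p-i}\equiv_p -\tfrac{1}{i}$. Splitting $H_l=H_{l-1}+1/l$ rewrites the resulting expression as
\[
S_1\equiv_p 2\bigl[H_{p-1}(1,-1;1/2)+H_{p-1}(-2;1/2)\bigr].
\]

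To kill the bracket, I would apply the same product-partition trick a second time to
\[
H_{p-1}(-1;1/2)\cdot H_{p-1}(1)=H_{p-1}(-1,1;1/2)+H_{p-1}(-2;1/2)+H_{p-1}(1,-1;1/2).
\]
The left-hand side is $\equiv_p 0$ by \eqref{C1}, and the first term on the right is $\equiv_p 0$ by \eqref{C6}, so the bracket vanishes. This gives $S_1\equiv_p 0$ and, by the first paragraph, \eqref{B2} as well. The main obstacle I expect is spotting the partial fraction/telescoping step that expresses $S_1$ as a combination of depth-$\le 2$ multiple harmonic sums; once $S_1$ is in that form, the two product expansions combined with the listed congruences do the rest essentially automatically.
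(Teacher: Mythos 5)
Your proposal is correct. The treatment of \eqref{B2} is exactly the paper's: the same stuffle expansion of $H_{p-1}(-1;2)\cdot H_{p-1}(-1;1/2)$ together with \eqref{C2}, \eqref{C4}, \eqref{C44} reduces it to \eqref{B1}. For \eqref{B1} you use the same core ingredients as the paper (partial fractions on $1/(jk)$, the reversal $H_{p-1-l}\equiv_p H_l$, and ultimately \eqref{C6}), but you route the computation differently: the paper first reindexes the inner sum so that the geometric factor sits on $k-j$, and lands directly on $-2H_{p-1}(-1,1;1/2)\equiv_p 0$, whereas your order of operations produces $2\bigl[H_{p-1}(1,-1;1/2)+H_{p-1}(-2;1/2)\bigr]$ and therefore needs the extra stuffle relation with $H_{p-1}(1)$ (via \eqref{C1}) to convert this bracket into $-H_{p-1}(-1,1;1/2)$. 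Both are valid; the paper's version is one step shorter, while yours makes the reliance on the stuffle algebra more systematic. One small point worth writing out: your reindexed sums over $l=k-j$ naturally stop at $l=p-2$, so identifying them with $H_{p-1}(1,-1;1/2)$ and $H_{p-1}(-2;1/2)$ silently adds the $l=p-1$ terms; this is harmless because you may extend $2\sum_{l\le p-2}2^{-l}H_l/l$ to $l=p-1$ for free (the added term contains $H_{p-1}\equiv_p 0$), but the justification should appear.
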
 
\begin{proof}
\noindent By \eqref{C6},
\begin{align*}
\sum_{k=1}^{p-1}\frac{1}{k2^k}\sum_{j=1}^{k-1} \frac{2^j}{j}
&\equiv_{p} \sum_{k=1}^{p-1}\frac{2^{-k}}{k}\sum_{j=1}^{k-1} \frac{2^{k-j}}{k-j}
=\sum_{k=1}^{p-1}\sum_{j=1}^{k-1} \frac{2^{-j}}{k(k-j)}\\
&=\sum_{j=1}^{p-2}\frac{2^{-j}}{j}\sum_{k=j+1}^{p-1}\left(\frac{1}{k-j}-\frac{1}{k}\right)
=\sum_{j=1}^{p-2}\frac{2^{-j}}{j}\sum_{k=1}^{p-j-1}\frac{1}{k}-H_{p-1}(-1,1;1/2)\\
&=\sum_{j=1}^{p-2}\frac{2^{-j}}{j}\sum_{k=j+1}^{p-1}\frac{1}{p-k}-H_{p-1}(-1,1;1/2)
\equiv_{p}-2H_{p-1}(-1,1;1/2)\equiv_{p}0.
\end{align*}
As regards \eqref{B2}, it follows from 
$$H_{p-1}(-1;2)\cdot H_{p-1}(-1;1/2)=H_{p-1}(2)+
\sum_{k=1}^{p-1}\frac{1}{k2^k}\sum_{j=1}^{k-1} \frac{2^j}{j}
+\sum_{k=1}^{p-1}\frac{2^k}{k}\sum_{j=1}^{k-1} \frac{1}{j2^j},$$
after using \eqref{C2}, \eqref{C4}, \eqref{C44}, and \eqref{B1}.
\end{proof}

\begin{lemma} 
Let $p>2$ be a prime, then we have
\begin{equation}\label{C7}
\sum_{k=2}^{p-1}\frac{1}{k^2}\sum_{m=k}^{p-1} \frac{(-1)^{m-k}}{\binom{m}{k}}\equiv_{p}
-2q_p(2).
\end{equation}
\end{lemma}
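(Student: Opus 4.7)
The plan is to swap the order of summation, evaluate the inner factorial sum in closed form, and recognize the result as a combination of harmonic quantities whose residues modulo $p$ are already recorded in Section~2, most importantly \eqref{C4} and \eqref{B2}.

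First I would rewrite $\binom{m}{k}^{-1}=k!(m-k)!/m!$ and invoke Wilson's theorem twice, in the forms $1/m!\equiv_{p}(-1)^{m+1}(p-1-m)!$ and $(p-k)!\equiv_{p}(-1)^k/(k-1)!$. Setting $j=m-k$ turns the inner sum over $m$ into $\sum_{j=0}^{n}j!(n-j)!$ with $n=p-1-k$. Then I would apply the classical identity
$$\sum_{j=0}^{n}j!(n-j)!=\frac{(n+1)!}{2^{n+1}}\sum_{i=1}^{n+1}\frac{2^i}{i},$$
obtained by multiplying the well-known evaluation of $\sum_{k}\binom{n}{k}^{-1}$ by $n!$. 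Collecting everything and applying Fermat's little theorem in the form $2^{p}\equiv 2\pmod{p}$, the left-hand side of \eqref{C7} simplifies to
$$-\frac{1}{2}\sum_{k=2}^{p-1}\frac{2^k}{k}\sum_{i=1}^{p-k}\frac{2^i}{i}\pmod{p}.$$

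To finish, set $A=H_{p-1}(-1;2)=\sum_{i=1}^{p-1}2^i/i$. Reindexing by $i=p-j$ on the missing tail and using $2^{p-j}\equiv_{p}2/2^j$ gives $\sum_{i=1}^{p-k}\frac{2^i}{i}\equiv_{p} A+2\sum_{j=1}^{k-1}\frac{1}{j2^j}$. Extending the outer index down to $k=1$ (which forces a compensating $+A$, coming from the $k=1$ term $-\tfrac12\cdot 2\cdot A$) transforms the expression into
$$-\frac{A}{2}\sum_{k=1}^{p-1}\frac{2^k}{k}-\sum_{k=1}^{p-1}\frac{2^k}{k}\sum_{j=1}^{k-1}\frac{1}{j2^j}+A\equiv -\frac{A^2}{2}+2q_p^2(2)+A\pmod{p},$$
where the middle double sum is evaluated by \eqref{B2}. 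Finally, substituting $A\equiv -2q_p(2)\pmod{p}$ from \eqref{C4} yields $-2q_p^2(2)+2q_p^2(2)-2q_p(2)=-2q_p(2)$, as required.

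The main technical point is invoking and correctly deploying the closed form for $\sum_{j=0}^{n}j!(n-j)!$ inside the modular arithmetic; once that step is in place, the remainder is careful sign and Fermat/Wilson bookkeeping together with two invocations of Lemma~1.
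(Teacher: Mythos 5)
Your proof is correct, and at its core it is the paper's own argument in a different dress: the identity $\sum_{j=0}^{n}j!\,(n-j)!=\frac{(n+1)!}{2^{n+1}}\sum_{i=1}^{n+1}\frac{2^i}{i}$ is exactly Gould's identity \cite[(2.4)]{G} multiplied through by $n!$, and your Wilson-theorem manipulation of the factorials is the reflection $k\mapsto p-k$ that the paper instead performs on the outer summation index. The only real divergence is in the final bookkeeping, where you land on $-\tfrac12 H_{p-1}(-1;2)^2$ together with \eqref{B2} rather than on \eqref{B1}; both halves of Lemma~1 (plus \eqref{C4}) close the argument equally well, and your signs and Fermat/Wilson reductions all check out.
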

\begin{proof} We have that
\begin{align*}
\sum_{k=2}^{p-1}\frac{1}{k^2}\sum_{m=k}^{p-1} \frac{(-1)^{m-k}}{\binom{m}{k}}
&=\sum_{k=2}^{p-1}\frac{1}{k^2}\sum_{m=0}^{p-1-k} \frac{(-1)^{m}}{\binom{k+m}{m}}
=\sum_{k=1}^{p-2}\frac{1}{(p-k)^2}\sum_{m=0}^{k-1} \frac{(-1)^{m}}{\binom{p-k+m}{m}}\\
&\equiv_{p} \sum_{k=1}^{p-2}\frac{1}{k^2}\sum_{m=0}^{k-1} \frac{1}{\binom{k-1}{m}}
=\sum_{k=1}^{p-2}\frac{1}{k2^k}\sum_{j=1}^{k} \frac{2^j}{j}\\
&=\sum_{k=1}^{p-1}\frac{1}{k2^k}\sum_{j=1}^{k-1} \frac{2^j}{j}
+H_{p-1}(2)-\frac{H_{p-1}(-1;2)}{(p-1)2^{p-1}}\\
&\equiv_{p} 0+0 -2q_p(2)=-2q_p(2)
\end{align*}
where we used \eqref{C4}, \eqref{B1},
$$(-1)^m\binom{p-k+m}{m}=\frac{1}{m!}\prod_{j=k-m}^{k-1}(p-j)\equiv_{p}\binom{k-1}{m},$$
and the identity \cite[(2.4)]{G}
$$\sum_{m=0}^{k-1} \frac{1}{\binom{k-1}{m}}=
\frac{k}{2^k}\sum_{j=1}^{k} \frac{2^j}{j}.$$
\end{proof}

\begin{lemma} 
Let $i,j$ be non-negative integers. 

\noindent Let $p>2$ be a prime, then, for $0< r<p$, we have
\begin{equation}\label{CC22}
\binom{(i+j)p}{r+ip}\equiv_{p^3}
\binom{i+j}{i}\binom{p}{r}j\left(1-p\left((i+j-1)H_{r-1}(1)+\frac{i}{r}\right)\right),
\end{equation}
and
\begin{equation}\label{CC2}
\sum_{m=0}^{p-1}\binom{p-1+(i+j)p}{m+ip}\equiv_{p^3}
\binom{i+j}{i}\left(1+(i+j+1)pq_p(2)+\binom{i+j+1}{2} p^2q^2_p(2)\right).
%2^{(p-1)(i+j+1)}.
\end{equation}
\end{lemma}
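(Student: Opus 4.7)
My plan is to prove \eqref{CC22} first by direct factorisation, and then to use it as a lemma to deduce \eqref{CC2}.

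For \eqref{CC22}, I would start from the factorisation
\[
\binom{(i+j)p}{ip+r} = \binom{(i+j)p}{ip}\cdot\frac{jp}{ip+r}\prod_{k=1}^{r-1}\frac{jp-k}{ip+k},
\]
and invoke the Ljunggren--Wolstenholme congruence $\binom{(i+j)p}{ip}\equiv_{p^3}\binom{i+j}{i}$. Each of the $r-1$ inner factors equals $-(1-jp/k)/(1+ip/k)$, which expands modulo $p^3$ to $-1 + (i+j)p/k - i(i+j)p^2/k^2$, while $\frac{jp}{ip+r}\equiv_{p^3} \frac{jp}{r}-\frac{ijp^2}{r^2}$. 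Multiplying these out and collecting terms through order $p^2$ in the product yields
\[
\binom{(i+j)p}{ip+r} \equiv_{p^3} \binom{i+j}{i}(-1)^{r-1}\frac{jp}{r}\left(1-p\left((i+j)H_{r-1}(1)+\frac{i}{r}\right)\right).
\]
The form displayed in \eqref{CC22} is then recovered by expanding $\binom{p}{r}\equiv_{p^3}\frac{(-1)^{r-1}p}{r}(1-pH_{r-1}(1))$: the extra $-pH_{r-1}(1)$ that it contributes converts the coefficient $(i+j-1)$ on the right back into $(i+j)$.

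For \eqref{CC2}, the key step is the Pascal identity $\binom{n-1}{k}=\binom{n}{k}-\binom{n-1}{k-1}$ applied with $n=(i+j+1)p$ and $k=ip+m$ for $1\le m\le p-1$. Summing these and solving for $S:=\sum_{m=0}^{p-1}\binom{(i+j+1)p-1}{ip+m}$ gives
\[
2S = \binom{(i+j+1)p-1}{ip} + \binom{(i+j+1)p-1}{(i+1)p-1} + \sum_{m=1}^{p-1}\binom{(i+j+1)p}{ip+m}.
\]
Via $\binom{n-1}{k}=\frac{n-k}{n}\binom{n}{k}$ and Ljunggren--Wolstenholme, each of the two boundary terms reduces to $\binom{i+j}{i}\pmod{p^3}$. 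Applying \eqref{CC22} with $j$ replaced by $j+1$ to the remaining sum reduces the task to evaluating
\[
\sum_{m=1}^{p-1}\binom{p}{m}\left(1-p\left((i+j)H_{m-1}(1)+\frac{i}{m}\right)\right)\pmod{p^3}.
\]
This splits into three pieces: the exact identity $\sum_{m=1}^{p-1}\binom{p}{m}=2pq_p(2)$; $\sum_{m=1}^{p-1}\binom{p}{m}/m\equiv_{p^2}0$, using $m\binom{p}{m}=p\binom{p-1}{m-1}$ together with \eqref{C66}; and $\sum_{m=1}^{p-1}\binom{p}{m}H_{m-1}(1)\equiv_{p^2}-pq_p^2(2)$, using $\binom{p}{m}\equiv_{p^2}(-1)^{m-1}p/m$ together with \eqref{C55}. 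Combining these via the identity $(j+1)\binom{i+j+1}{i}=(i+j+1)\binom{i+j}{i}$ produces the claimed right-hand side.

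The main difficulty will be the careful bookkeeping of powers of $p$: \eqref{CC22} is a mod $p^3$ statement about a quantity of intrinsic size $p$, so each factor in the product has to be expanded to exactly the right order, and the final combination in \eqref{CC2} must deliver the combinatorial coefficients $\binom{i+j}{i}$, $(i+j+1)\binom{i+j}{i}$ and $\binom{i+j+1}{2}\binom{i+j}{i}$ simultaneously in the three successive powers of $p$.
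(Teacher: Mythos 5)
Your proof of \eqref{CC22} is essentially the paper's own: the same factorisation
$\binom{(i+j)p}{ip+r}=\binom{(i+j)p}{ip}\frac{jp}{ip+r}\prod_{k=1}^{r-1}\frac{jp-k}{ip+k}$,
the same appeal to $\binom{(i+j)p}{ip}\equiv_{p^3}\binom{i+j}{i}$, the same expansion to the intermediate form
$\binom{i+j}{i}\frac{(-1)^{r-1}jp}{r}\bigl(1-p((i+j)H_{r-1}(1)+\tfrac{i}{r})\bigr)$, and the same final substitution of
$\binom{p}{r}\equiv_{p^3}\frac{(-1)^{r-1}p}{r}(1-pH_{r-1}(1))$; your bookkeeping of the orders is consistent (the $p^2$ terms inside the product are in fact superfluous, since the prefactor already carries one power of $p$). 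For \eqref{CC2}, however, you take a genuinely different route. The paper expands $\binom{p-1+(i+j)p}{m+ip}$ by Vandermonde against $\binom{p-1}{l}$, symmetrises the resulting double sum in $i\leftrightarrow j$, and needs the extra identity $\sum_{l=1}^{p-1}\binom{p-1}{l}\sum_{r=1}^{l}\binom{p}{r}=2^{p-1}(2^{p-1}-1)$ together with four harmonic-sum congruences. You instead apply Pascal's rule along the row $n=(i+j+1)p$ to get the reflection identity
\begin{equation*}
2S=\binom{(i+j+1)p-1}{ip}+\binom{(i+j+1)p-1}{(i+1)p-1}+\sum_{m=1}^{p-1}\binom{(i+j+1)p}{ip+m},
\end{equation*}
reduce both boundary terms to $\binom{i+j}{i}$ via $\binom{n-1}{k}=\frac{n-k}{n}\binom{n}{k}$ and Wolstenholme, and feed \eqref{CC22} (with $j\mapsto j+1$, using $(j+1)\binom{i+j+1}{i}=(i+j+1)\binom{i+j}{i}$) into the remaining sum; the three resulting pieces check out ($2^p-2=2pq_p(2)$ exactly, the $i/m$ piece vanishes mod $p^2$ by \eqref{C66}, and the $H_{m-1}(1)$ piece gives $-pq_p^2(2)$ by \eqref{C55}), and dividing by $2$ (legitimate as $p>2$) yields exactly the stated right-hand side including the $\binom{i+j+1}{2}$ coefficient. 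Your version buys a cleaner single sum in place of the paper's weighted double sum and avoids the $2^{p-1}(2^{p-1}-1)$ identity and the $H_{p-1}(1,1)$, $H_{p-1}(2)$ inputs; both arguments ultimately rest on the same two nontrivial congruences \eqref{C55} and \eqref{C66}, and both (like the paper itself) invoke $\binom{ap}{bp}\equiv_{p^3}\binom{a}{b}$, which strictly requires $p>3$.
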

\begin{proof} We have that
\begin{align*}
\binom{(i+j)p}{r+ip}&=
\binom{(i+j)p}{ip}\frac{jp}{ip+r}
\prod_{k=1}^{r-1}\frac{jp-k}{ip+k}\\
&\equiv_{p^3}\binom{i+j}{i}\frac{jp(-1)^{j-1}}{ip+r}
\prod_{k=1}^{r-1}\frac{1-\frac{jp}{k}}{1+\frac{ip}{k}}\\
&\equiv_{p^3}\binom{i+j}{i}\frac{jp(-1)^{j-1}}{r}\left(1-\frac{ip}{r}\right)
\left(1-jpH_{r-1}(1)\right)\left(1-ipH_{r-1}(1)\right)\\
&\equiv_{p^3}\binom{i+j}{i}\frac{jp(-1)^{r-1}}{r}
\left(1-p\left((i+j)H_{r-1}(1)+\frac{i}{r}\right)\right).
\end{align*}
Congruence \eqref{CC22} follows as soon as we note that
$$\binom{p}{r}\equiv_{p^3}\frac{p(-1)^{r-1}}{r}
\left(1-pH_{r-1}(1)\right).$$
As regards \eqref{CC2},
\begin{align*}
\sum_{m=0}^{p-1}\binom{p-1+(i+j)p}{m+ip}&=
\sum_{m=0}^{p-1}\sum_{l=0}^{p-1}\binom{(i+j)p}{m-l+ip}\binom{p-1}{l}\\
&=\binom{(i+j)p}{ip}2^{p-1}+\sum_{l=1}^{p-1}\binom{p-1}{l}
\sum_{r=1}^{l}\left(\binom{(i+j)p}{r+ip}+\binom{(i+j)p}{r+jp}\right)\\
&\equiv_{p^3} \binom{i+j}{i}2^{p-1}
+\binom{i+j}{i}\sum_{l=1}^{p-1}\binom{p-1}{l}\sum_{r=1}^{l}\binom{p}{r}\\
&\qquad\qquad\qquad
\cdot \left((i+j)-p(i+j)(i+j-1)H_{r-1}(1)-\frac{2pij}{r}\right)
\end{align*}
where in the last step we applied \eqref{CC22}.
By \eqref{C3} and \eqref{C55},
\begin{align*}
\sum_{l=1}^{p-1}\binom{p-1}{l}\sum_{r=1}^{l}\binom{p}{r}H_{r-1}(1)
&\equiv_{p^2}-p\sum_{r=1}^{p-1}\frac{(-1)^rH_{r-1}(1)}{r}\sum_{r=l}^{p-1}(-1)^l\\
&=-\frac{p}{2}\left(H_{p-1}(1,-1;-1)+H_{p-1}(1,1)\right)
\equiv_{p^2} -\frac{pq_p^2(2)}{2},
\end{align*}
and similarly, by \eqref{C2} and \eqref{C66},
$$\sum_{l=1}^{p-1}\binom{p-1}{l}\sum_{r=1}^{l}\binom{p}{r}\frac{1}{r}
\equiv_{p^2}-\frac{p}{2}\left(H_{p-1}(-2;-1)+H_{p-1}(2)\right)
\equiv_{p^2} 0.$$
Moreover we have the identity
$$\sum_{l=1}^{p-1}\binom{p-1}{l}\sum_{r=1}^{l}\binom{p}{r}=2^{p-1}(2^{p-1}-1).$$
Finally, we obtain
$$\sum_{m=0}^{p-1}\binom{p-1+(i+j)p}{m+ip}
\equiv_{p^3}\binom{i+j}{i}\left(
2^{p-1}+(i+j)2^{p-1}(2^{p-1}-1)+p^2\binom{i+j}{2}q_p^2(2)
\right)$$
which is equivalent to \eqref{CC2}.
\end{proof}

\begin{lemma} Let $i,j$ be non-negative integers. 

\noindent Let $p>2$ be a prime, then for $0\leq m\leq k<p$, we have
\begin{equation}\label{CC1}
\binom{k+(i+j)p}{m+ip}\equiv_{p^2}
\binom{i+j}{i}\binom{k}{m}(1+p((i+j)H_k(1)-jH_{k-m}(1)-iH_m(1))),
\end{equation}
and
\begin{equation}\label{CC11}
\sum_{m=0}^k\binom{k+(i+j)p}{m+ip}\equiv_{p^2}
2^k\binom{i+j}{i}\left(
1+p(i+j)\sum_{m=1}^{k}\frac{1}{m2^m}\right).
\end{equation}
\end{lemma}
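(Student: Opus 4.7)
The plan is to prove \eqref{CC1} by factoring out $\binom{(i+j)p}{ip}$ from the left-hand side and expanding the remaining ratio of rising products to first order in $p$; then \eqref{CC11} will follow from \eqref{CC1} after a symmetry collapse and a classical binomial--harmonic identity.

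For \eqref{CC1}, the starting point is
\[
\binom{k+(i+j)p}{m+ip}=\binom{(i+j)p}{ip}\cdot\frac{\prod_{l=1}^{k}\bigl((i+j)p+l\bigr)}{\prod_{l=1}^{m}(ip+l)\,\prod_{l=1}^{k-m}(jp+l)}.
\]
Each product has the form $\prod_{l=1}^{n}(ap+l)=n!\prod_{l=1}^{n}(1+ap/l)\equiv_{p^2}n!\bigl(1+ap\,H_n(1)\bigr)$. Substituting with $(a,n)=(i+j,k)$, $(i,m)$, and $(j,k-m)$, invoking the classical congruence $\binom{(i+j)p}{ip}\equiv_{p^2}\binom{i+j}{i}$, and expanding the ratio modulo $p^2$ produces exactly the factor $1+p\bigl((i+j)H_k(1)-iH_m(1)-jH_{k-m}(1)\bigr)$ appearing in \eqref{CC1}.

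For \eqref{CC11}, summing \eqref{CC1} over $m=0,\dots,k$ gives
\[
\sum_{m=0}^{k}\binom{k+(i+j)p}{m+ip}\equiv_{p^2}\binom{i+j}{i}\Bigl[2^k\bigl(1+p(i+j)H_k(1)\bigr)-p\sum_{m=0}^{k}\binom{k}{m}\bigl(iH_m(1)+jH_{k-m}(1)\bigr)\Bigr].
\]
Replacing $m$ by $k-m$ in $\sum_{m}\binom{k}{m}H_{k-m}(1)$ shows that this sum equals $\sum_{m}\binom{k}{m}H_m(1)$, so the inner sum collapses to $(i+j)\sum_{m}\binom{k}{m}H_m(1)$ and the bracket reduces to $2^k+p(i+j)\bigl(2^kH_k(1)-\sum_{m=0}^{k}\binom{k}{m}H_m(1)\bigr)$.

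It thus remains to prove the identity $\sum_{m=0}^{k}\binom{k}{m}H_m(1)=2^k\bigl(H_k(1)-\sum_{m=1}^{k}1/(m2^m)\bigr)$. Using $H_m(1)=\int_0^1\frac{1-(1-t)^m}{t}\,dt$ and the binomial theorem, the left-hand side equals $\int_0^1\frac{2^k-(2-t)^k}{t}\,dt$; the substitution $u=2-t$ converts this into $\int_1^2\frac{2^k-u^k}{2-u}\,du$, and expanding the integrand as the geometric sum $\sum_{i=0}^{k-1}2^{k-1-i}u^i$ integrates term-by-term to $\sum_{j=1}^{k}(2^k-2^{k-j})/j$, which is the desired right-hand side. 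The main obstacle here is essentially nil: everything reduces to a first-order Taylor expansion modulo $p^2$, and the binomial--harmonic identity is dispatched in one line via the integral representation.
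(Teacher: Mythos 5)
Your proof is correct, but it reaches \eqref{CC1} by a genuinely different route than the paper. The paper expands $\binom{k+(i+j)p}{m+ip}=\sum_{l=0}^{k}\binom{(i+j)p}{m-l+ip}\binom{k}{l}$ by Vandermonde, feeds in the congruence \eqref{CC22} from the preceding lemma modulo $p^2$, and then recombines the two resulting sums into $\binom{p+k}{m}$ and $\binom{p+k}{k-m}$; your argument instead factors out $\binom{(i+j)p}{ip}$ and Taylor-expands the three rising products $\prod_{l=1}^{n}(ap+l)\equiv_{p^2}n!\,(1+ap\,H_n(1))$ directly (legitimate since all $l\le k<p$ are $p$-adic units), which is shorter, self-contained, and decouples this lemma from \eqref{CC22}; the only external input you need is $\binom{(i+j)p}{ip}\equiv_{p^2}\binom{i+j}{i}$, which the paper also invokes freely (as Wolstenholme) and which holds modulo $p^2$ for every prime. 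For \eqref{CC11} your symmetry collapse $\sum_{m}\binom{k}{m}H_{k-m}(1)=\sum_{m}\binom{k}{m}H_m(1)$ matches the paper exactly, and where the paper simply cites the identity $\sum_{m=0}^{k}\binom{k}{m}H_m(1)=2^k\bigl(H_k(1)-\sum_{m=1}^{k}\tfrac{1}{m2^m}\bigr)$ from the literature, you supply a clean proof via $H_m(1)=\int_0^1\frac{1-(1-t)^m}{t}\,dt$, the substitution $u=2-t$, and termwise integration of the geometric sum; I checked the endpoint bookkeeping and the final reindexing to $\sum_{j=1}^{k}(2^k-2^{k-j})/j$, and it is all correct. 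In short: same statement, a more elementary and self-contained derivation of the first congruence, and a bonus proof of the cited harmonic identity.
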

\begin{proof} 
We have that,
\begin{align*}
\binom{k+(i+j)p}{m+ip}&=
\sum_{l=0}^{k}\binom{(i+j)p}{m-l+ip}\binom{k}{l}\\
&=\binom{i+j}{j}\binom{k}{m}+
\sum_{l=1}^{m}\binom{(i+j)p}{l+ip}\binom{k}{m-l}
+\sum_{l=1}^{k-m}\binom{(i+j)p}{l+jp}\binom{k}{m+l}
\end{align*}
Then we apply \eqref{CC22} modulo $p^2$,
\begin{align*}
\binom{k+(i+j)p}{m+ip}&\equiv_{p^2}
\binom{i+j}{j}\left[\binom{k}{m}+
j\sum_{l=1}^{m}\binom{p}{l}\binom{k}{m-l}
+i\sum_{l=1}^{k-m}\binom{p}{l}\binom{k}{k-m-l}\right]\\
&=\binom{i+j}{j}\left[(1-j-i)\binom{k}{m}+
j\binom{p+k}{m}+i\binom{p+k}{k-m}\right]\\
&\equiv_{p^2}
\binom{i+j}{i}\binom{k}{m}(1+p((i+j)H_k(1)-jH_{k-m}(1)-iH_m(1))).
\end{align*}
From \eqref{CC1}, by summing over $m$ we obtain
$$\sum_{m=0}^k\binom{k+(i+j)p}{m+ip}\equiv_{p^2}
\binom{i+j}{i}\left(
2^k+p(i+j)\left(2^k H_k(1)-\sum_{m=0}^k\binom{k}{m}H_m(1)\right)
\right).$$
Congruence \eqref{CC11} follows after applying the identity \cite[(39)]{S} 
$$\sum_{m=0}^{k} \binom{k}{m}H_m(1)=2^k\left(H_k(1)-\sum_{m=1}^{k}\frac{1}{m2^m}\right).$$
\end{proof}

\section{Proof of the supercongruence \eqref{SS}.}

\noindent Let LHS and RHS be the left-hand side and the right-hand side of \eqref{SS}.
We have that
\begin{align*}
%\sum_{m_1=0}^{rp-1}\sum_{m_2=0}^{sp-1}\sum_{m_3=0}^{tp-1}
%\binom{m_1+m_2+m_3}{m_1,m_2,m_3}
\mbox{LHS}&=
\sum_{m_1=0}^{rp-1}\sum_{m_2=0}^{sp-1} \binom{m_1+m_2}{m_1}
\sum_{m_3=0}^{tp-1}\binom{m_1+m_2+m_3}{m_3}\\
&=tp\sum_{m_1=0}^{rp-1}\sum_{m_2=0}^{sp-1} \binom{m_1+m_2}{m_1}
\binom{m_1+m_2+tp}{m_1+m_2}\frac{1}{m_1+m_2+1}\\
&=tp\sum_{m=0}^{rp-1}\sum_{k=m}^{m+sp-1} \binom{k}{m}
\binom{k+tp}{k}\frac{1}{k+1}\\
&=tp\sum_{i=0}^{r-1}\sum_{j=0}^{s-1}\sum_{m=0}^{p-1}\sum_{k=m}^{m+p-1} 
\binom{k+(i+j)p}{m+ip}\binom{k+(i+j+t)p}{k+(i+j)p}\frac{1}{k+(i+j)p+1}\\
&=\sum_{i=0}^{r-1}\sum_{j=0}^{s-1} (A_{ij}+B_{ij}+C_{ij})
\end{align*}
where
\begin{align*}
A_{ij}&:=tp\sum_{m=0}^{p-1}\sum_{k=m}^{p-1} 
\binom{k+(i+j)p}{m+ip}\binom{k+(i+j+t)p}{k+(i+j)p}\frac{1}{k+(i+j)p+1},\\
B_{ij}&:=\frac{tp}{(i+j+1)p+1}\binom{(i+j+t+1)p}{(i+j+1)p}
\sum_{m=1}^{p-1}\binom{(i+j+1)p}{m+ip},\\
C_{ij}&:=tp\sum_{m=0}^{p-1}\sum_{k=1}^{m-1} \binom{k+(i+j+1)p}{m+ip}
\binom{k+(i+j+t+1)p}{k+(i+j+1)p}\frac{1}{k+1+(i+j+1)p}.
\end{align*}
In a similar way
\begin{align*}
\mbox{RHS}
&=t\sum_{i=0}^{r-1}\sum_{j=0}^{s-1}
\binom{i+j}{i}\binom{i+j+t}{i+j}\frac{1}{i+j+1}.
\end{align*}
By Wolstenholme's theorem $\binom{ap}{bp}\equiv_{p^3} \binom{a}{b}$, and
$\binom{n_1p+n_0}{k_1p+k_0}\equiv_{p} \binom{n_1}{k_1}\binom{n_0}{k_0}$,
\begin{align*}
B_{ij}&\equiv_{p^3}\frac{t(i+j+1)p^2}{(i+j+1)p+1}\binom{i+j+t+1}{i+j+1}
\sum_{m=1}^{p-1}\binom{p-1+(i+j)p}{m-1+ip}\frac{1}{m+ip}\\
&\equiv_{p^3}tp^2(i+j+1)\binom{i+j+t+1}{i+j+1}\binom{i+j}{i}
\sum_{m=1}^{p-1}\binom{p-1}{m-1}\frac{1}{m}\\
&\equiv_{p^3}2tp^2(i+j+t+1)\binom{i+j+t}{i,j,t}q_p(2),
\end{align*}
where
$$\sum_{m=1}^{p-1}\binom{p-1}{m-1}\frac{1}{m}=
\frac{1}{p}\sum_{m=1}^{p-1}\binom{p}{m}=\frac{2^p-2}{p}=2q_p(2).$$
Finally, we note that for $k<m<p$
$$\binom{k+p}{m}
=\frac{1}{m!}\prod_{j=1}^{k}(j+p)\cdot p\cdot 
\prod_{j=1}^{m-(k+1)}(p-j)\equiv_{p^2} \frac{p(-1)^{m-(k+1)}}{(k+1)\binom{m}{k+1}},$$
and
\begin{align*}
\binom{k+(i+j+1)p}{m+ip}
&=\frac{(k+p+(i+j)p)\cdots(p+(i+j)p)}{(m+ip)\cdots(1+ip)}
\binom{k+p-m+(i+j)p}{ip}\\
&\equiv_{p^2}(i+j+1)\binom{k+p}{m}\binom{k-m+p+(i+j)p}{ip}\\
&\equiv_{p^2}(i+j+1)\frac{p(-1)^{m-(k+1)}}{(k+1)\binom{m}{k+1}}\binom{i+j}{i}.
\end{align*}
Moreover
$$\binom{k+(i+j+t+1)p}{k+(i+j+1)p}\equiv_{p} \binom{i+j+t+1}{i+j+1},$$
and we obtain
\begin{align*}
C_{ij}&=tp^2(i+j+1)\binom{i+j+t+1}{i+j+1}\binom{i+j}{i}
\sum_{m=0}^{p-1}\sum_{k=1}^{m-1}\frac{(-1)^{m-(k+1)}}{(k+1)^2\binom{m}{k+1}}\\
&\equiv_{p^3}tp^2(i+j+t+1)\binom{i+j+t}{i,j,t}\sum_{k=1}^{p-2}\frac{1}{(k+1)^2}\sum_{m=k+1}^{p-1} \frac{(-1)^{m-(k+1)}}{\binom{m}{k+1}}\\
&\equiv_{p^3}tp^2(i+j+t+1)\binom{i+j+t}{i,j,t}\sum_{k=2}^{p-1}\frac{1}{k^2}\sum_{m=k}^{p-1} \frac{(-1)^{m-k}}{\binom{m}{k}}\\
&\equiv_{p^3} -2tp^2(i+j+t+1)\binom{i+j+t}{i,j,t}q_p(2)
\end{align*}
where in the last step we used \eqref{C7}.
Therefore $B_{ij}+C_{ij}\equiv_{p^3} 0$ and it suffices to show that
\begin{equation}\label{ff}
A_{ij}\equiv_{p^3}\frac{t}{i+j+1}\binom{i+j}{i}\binom{i+j+t}{i+j}.
\end{equation}
Now, by \eqref{CC2},
\begin{align*}
p\sum_{k=p-1}^{p-1} \sum_{m=0}^{k}\cdots
&=\frac{1}{i+j+t+1}\binom{(i+j+t+1)p}{(i+j+1)p}\sum_{m=0}^{k}\binom{p-1+(i+j)p}{m+ip}\\
&\equiv_{p^3}
\frac{1}{i+j+1}\binom{i+j+t}{i+j}\binom{i+j}{i}2^{(p-1)(i+j+1)}\\
&\equiv_{p^3}
\binom{i+j+t}{i,j,t}\left(\frac{1}{i+j+1}+pq_p(2)+\frac{(i+j)}{2}\,p^2q^2_p(2)\right).
\end{align*}
because
$$\frac{(2^{p-1})^{(i+j+1)}}{i+j+1}=\frac{(1+pq_p(2))^{i+j+1}}{i+j+1}
\equiv_{p^3} \frac{1}{i+j+1}+pq_p(2)+\frac{(i+j)}{2}\,p^2q^2_p(2).$$
Moreover by \eqref{CC1} and \eqref{CC11},
\begin{align*}
p\sum_{k=0}^{p-2} \sum_{m=0}^{k}\cdots
&\equiv_{p^3}
p\sum_{k=0}^{p-2}\left(\frac{1}{k+1}-\frac{p(i+j)}{(k+1)^2}\right)
\binom{i+j+t}{i+j}(1+p(i+j)H_k(1))\\
   &\qquad \cdot2^k\binom{i+j}{i}\left(
1+p(i+j)\sum_{m=1}^{k}\frac{1}{m2^m}\right)\\
&\equiv_{p^3}
\frac{p}{2}
\binom{i+j+t}{i,j,t}\Bigg(H_{p-1}(-1;2)-p(i+j)H_{p-1}(-2;2)\\
&\qquad\left. +p(i+j)H_{p-1}(1,-1;2) 
+p(i+j)\sum_{k=0}^{p-2}\frac{1}{k+1}\sum_{m=1}^{k}\frac{1}{m2^m}\right)\\
&\equiv_{p^3}
\binom{i+j+t}{i,j,t}\left(-pq_p(2)-\frac{(i+j)}{2}p^2q_p^2(2)\right),
\end{align*}   
where in the last step we used \eqref{C4}, \eqref{C10}, \eqref{C5}, and \eqref{B2}.
Hence
\begin{align*}
A_{ij}=tp\sum_{k=0}^{p-1} \sum_{m=0}^{k}\cdots
&\equiv_{p^3} \frac{t}{i+j+1}\binom{i+j+t}{i,j,t}.
\end{align*}   
and the proof of \eqref{ff} is complete.

\section{Some further remarks}
In \cite{AZ} appeared some other supercongruences. 

\begin{itemize}

\item[-]\noindent  Supercongruence 1: for any prime $p$,
$$\sum_{n=0}^{p-1} \binom{2n}{n}\equiv_{p^2}\leg{p}{3}$$
which is congruence (1.9) at p. 647 in \cite{ST} (here $\leg{p}{3}$ is the Legendre symbol).

\item[-]\noindent  Supercongruence 2: for any prime $p$,
$$\sum_{n=0}^{p-1} C_n\equiv_{p^2}\frac{1}{2}\left(3\leg{p}{3}-1\right)$$
which is congruence (1.7) at p. 647 in \cite{ST} (here $C_n$ is the $n$th Catalan numbers).

\item[-]\noindent  Supercongruence 5: for any prime $p$,
$$\sum_{n=0}^{p-1}\sum_{m=0}^{p-1} \binom{n+m}{m}^2\equiv_{p^2}\leg{p}{3}$$
which is equivalent to Supercongruence 1 because
\begin{align*}
\sum_{n=0}^{p-1}\sum_{m=0}^{p-1} \binom{n+m}{m}^2&=
\sum_{k=0}^{p-1}\sum_{m=0}^{k} \binom{k}{m}^2+\sum_{k=0}^{p-2}\sum_{m=k+1}^{p-1} \binom{k+p}{m}^2
\equiv_{p^2} \sum_{k=0}^{p-1}\binom{2k}{k}
\end{align*}
where we used the fact that $p$ divides $\binom{k+p}{m}$ and by Vandermonde's convolution  
$\sum_{m=0}^{k} \binom{k}{m}^2=\binom{2k}{k}$.
\end{itemize}

\medskip

\end{document}